\newtheorem{thm}{Theorem}
\newtheorem{lem}[thm]{Lemma}
\newtheorem{rem}[thm]{Remark}
\newcommand{\bb}[1]{\mathbb{#1}}
\newcommand{\ip}[2]{\left \langle #1 , #2 \right\rangle}
\newcommand{\n}{\nabla}
\newcommand{\ov}[1]{{\overline{#1}}}
\newcommand{\e}{\epsilon}
\renewcommand{\a}{\alpha}
\renewcommand{\b}{\beta}
\renewcommand{\l}{\lambda}
\newcommand{\g}{\gamma}
\renewcommand{\d}{\delta}
\newcommand{\tr}{\operatorname{tr}}
\newcommand{\on}[1]{{\operatorname{#1}}}
\title[Weyl Estimates for spacelike surfaces]{Weyl Estimates for spacelike hypersurfaces in de Sitter space}
\author[D. Ballesteros-Chavez]{Daniel Ballesteros-Chavez}
\address{Daniel Ballesteros-Chavez, Dep. Applied Mathematics \\Silesian University of Technology\\
           Gliwice, 44-100\\
          Poland.}
    \email{daniel.ballesteros-chavez@polsl.pl}
\author[W. Klingenberg]{Wilhelm Klingenberg}
\address{Wilhelm Klingenberg, Department of Mathematics\\ Durham University, Durham, DH1 3LE
           UK.}
    \email{wilhelm.klingenberg@dur.ac.uk}
\author[B. Lambert]{Ben Lambert}
\address{Ben Lambert,
           Department of Electronics, Computing and Mathematics,
    University of Derby, Derby DE22 1GB
          \\UK.}
    \email{b.lambert@derby.ac.uk}
\begin{document}
\maketitle
 
\tableofcontents
\begin{abstract}
    We study the isometric spacelike embedding problem in scaled de Sitter space, and obtain Weyl-type estimates and the corresponding closedness in the space of embeddings.
\end{abstract}
\section{Introduction}
\noindent
The problem of isometric embedding of a closed  surface of genus zero and positive curvature $(S^2,g)$ into Euclidean 3-space
$\mathbb{R}^3$ subsequently known as the Weyl problem was considered by Hermann Weyl in 1916. He  outlined a method to prove the existence of an embedding using the continuity method and proved a curvature estimate. In his landmark thesis, Louis Nirenberg \cite{nirenberg:53}
solved the embedding problem under the assumption that the coefficients of $g$ have continuous fourth order derivatives. More recently, interest in isometric embeddings into semi-Riemannian spaces has been kindled by Mu-Tao Wang and Shing-Tung Yau \cite{Wang-Yau} due to their relation to the Brown-York quasilocal mass in general relativity.

\noindent
A range of related Weyl estimates have previously been investigated in Euclidean spaces. We particularly note that Yanyan Li and Gilbert Weinstein \cite{Li-Weinstein} generalised the Weyl estimates to hypersurfaces in $\mathbb{R}^{n+1}$, and Yanyan Li and Pengfei Guan investigated the degenerate curvature case \cite{GuanLi}. Motivated by \cite{Wang-Yau}, there has been surge of interest in isometric embeddings into Euclidean spaces, as this is a first step in the embedding into Minkowski space. For example , Chen-Yun Lin and Ye-Kai Wang used estimates on isometric embeddings in hyperbolic space to demonstrate the existence of isometric embeddings in Anti de Sitter space \cite{LinWang}. Recently Pengfei Guan and Siyuan Lu~\cite{PGuan-SLu} established an estimate of the mean curvature  of an immersed hypersurface with nonnegative extrinsic scalar curvature in a Riemannian manifold. Then  Chunhe Li and Zhizhang Wang~\cite{Li-Wang} obtained the openness of the collection of embeddable metrics. These results together with the a priori estimates obtained by Siyuan Lu~\cite{luS1} imply existence results for embedding into Riemannian manifolds. 

\noindent
In this paper we study a spacelike isometric embedding
of a metric on the sphere into de Sitter space  $X:(S^n,g) \to (S^{n,1}_1,\bar{g})$, the former being a metric on the sphere with some curvature conditions, and the latter being de Sitter space, that is, the unit sphere in ${\mathbb R}^{n+1,1}$ with the induced metric. Thus, in respect of coordinates on $S^n$, we shall study solutions of the following system of equations for $X$ :
\begin{equation}
  \label{eq:1}
  \bar{g}(\nabla_iX,\nabla_j X) =  g_{ij}.
\end{equation}
Our results are as follows.
\begin{thm}\label{thm:CurvEsts}
Suppose that $g$ is a smooth metric of positive curvature on $S^2$. Then there exists a $\rho_0 > 0$ such that for $0 < \rho < \rho_0$, there is a bound $ C(g, \rho) > 0 $ such that any admissible isometric embedding of $(S^2, g)$ into $(S^{2,1}_{\rho},\overline g)$ satisfies
\begin{equation}
    |A|^2  \leq C(g, \rho).
\end{equation}
Furthermore, if g is a smooth metric on $S^n$ satisfies 
\begin{equation}
\rho^{-2}n(n-2) <  R < \rho^{-2}n(n-1),
\end{equation}
for some
$\rho >0$ and the scalar curvature $R$,
then for any admissible isometric embedding of $(S^n, g)$ into $(S^{n,1}_{\rho},\overline g)$ we have  
    \begin{equation}
    |A|^2 \leq C(n,g,\rho)  .
    \end{equation}
\end{thm}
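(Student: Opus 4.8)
The plan is to reduce both statements to a single \emph{mean curvature estimate} and then to establish that estimate by a maximum principle applied to the largest principal curvature. First I would exploit the Gauss equation for a spacelike hypersurface with timelike unit normal. Since de Sitter space has constant sectional curvature $\rho^{-2}$ and the normal is timelike, the quadratic term enters with the sign opposite to the Riemannian case, giving
\begin{equation}
R = \rho^{-2}n(n-1) - \left(H^2 - |A|^2\right),
\end{equation}
so that $2\sigma_2(\kappa) = H^2 - |A|^2 = \rho^{-2}n(n-1) - R$ is a \emph{fixed} function of the background metric $g$. The upper pinching $R < \rho^{-2}n(n-1)$ is exactly $\sigma_2 > 0$, while the lower pinching $\rho^{-2}n(n-2) < R$ is exactly $\sigma_2 < \tfrac12\rho^{-2}n$; hence $\sigma_2$ is trapped between two positive constants depending only on $(n,g,\rho)$. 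In dimension two these conditions read $0 < K < \rho^{-2}$: positivity of $K$ is assumed, and $K < \rho^{-2}$ holds once $\rho < \rho_0 := (\max_{S^2}K)^{-1/2}$, which explains why the planar case needs only small $\rho$. The key consequence is $|A|^2 = H^2 - 2\sigma_2 \le H^2$, so that under the admissibility hypothesis (which forces $H>0$ and $\kappa$ into the Gårding cone $\Gamma_2^+$) the whole theorem follows once we bound $H$, equivalently $\kappa_{\max} := \max_i \kappa_i$, from above.

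The heart of the matter is therefore the bound $\kappa_{\max} \le C(n,g,\rho)$. I would run a maximum principle in the spirit of Guan--Lu \cite{PGuan-SLu} and Lu \cite{luS1}, adapted to the Lorentzian ambient. The ingredients are the Codazzi equation, which in a constant-curvature ambient makes $\nabla h$ totally symmetric, and the resulting Simons-type identity
\begin{equation}
\Delta h_{ij} = \nabla_i\nabla_j H + (\text{quadratic in } h) + (\text{ambient curvature terms}),
\end{equation}
in which the $\rho^{-2}$ contributions and the sign coming from the timelike normal must be tracked with care. Introducing a support/height function $u$ built from a fixed future timelike vector $a$ (so $u = -\langle X, a\rangle$), whose Hessian is governed by $h_{ij}$ and $u$, I would study $W = \log \kappa_{\max} - N\log u$ for a large constant $N$. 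At an interior maximum of $W$, differentiating twice and substituting the Simons identity yields a differential inequality in which the third-order and gradient terms are absorbed using the critical-point relation $\nabla\log\kappa_{\max} = N\nabla\log u$ together with the concavity of $\sigma_1$ along the level set $\{\sigma_2 = \mathrm{const}\}$; the pinched range of $\sigma_2$ is precisely what keeps the associated ellipticity and concavity constants uniform. This forces $\kappa_{\max}(p_0) \le C$ at the maximum, and hence everywhere, after which $|A|^2 = H^2 - 2\sigma_2 \le H^2$ closes the estimate.

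The main obstacle will be the book-keeping in this maximum principle rather than any isolated computation, and three points need attention. First, $\kappa_{\max}$ is only Lipschitz where the top eigenvalue has higher multiplicity, so the argument must be made rigorous by perturbing to a smooth symmetric function of $h$ or by a viscosity formulation. Second, the gradient terms of the form $\kappa_{\max}^{-2}\sum_k(\nabla_k h_{\max\,\max})^2$ produced by $\Delta\log\kappa_{\max}$ must be dominated, and this is exactly where the positive lower bound on $\sigma_2$ and the choice of $N$ are spent. Third, the Lorentzian signs in the Gauss and Simons identities reverse several terms relative to the Euclidean Weyl problem, so one must check that each reversal is favourable, or at worst absorbable within the pinched range of $\sigma_2$.

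A preliminary $C^0$--$C^1$ control is also required so that $\log u$ is a legitimate auxiliary function: one needs two-sided bounds $0 < c \le u \le C$ on the support function, obtained from barriers in de Sitter. This is where the hypothesis that $g$ is a fixed metric on $S^n$ enters, since the comparison geometry, and hence the constant $C(n,g,\rho)$, depends on the diameter and curvature of $(S^n,g)$.
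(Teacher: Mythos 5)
Your reduction is exactly the paper's: the Gauss equation with timelike normal gives $2\sigma_2=H^2-|A|^2=\rho^{-2}n(n-1)-R=:\psi_\rho$, a function of $(g,\rho)$ alone; the two pinching inequalities are precisely $0<\psi_\rho<n\rho^{-2}$; for $n=2$ they read $0<K<\rho^{-2}$, which explains the smallness of $\rho_0$; and $|A|^2=H^2-\psi_\rho\le H^2$ reduces everything to an upper bound on $H$. All of this is correct and matches the paper.

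The gap is in the core step, the bound on $H$. You propose a Guan--Lu type quantity $W=\log\kappa_{\max}-N\log u$, but you leave precisely the decisive computations unverified (you say yourself that the absorption of the gradient terms and the favourability of the Lorentzian sign reversals ``must be checked''), and one ingredient would fail as stated: the height function $u=-\langle X,a\rangle$ of a closed spacelike hypersurface in de Sitter space need not be positive, so $\log u$ is not admissible. Indeed, with the paper's normalisation the identity $\tau^2=1+\rho^{-2}\eta^2+|\nabla\eta|^2$ forces $\eta=0$ at any point where $\tau=1$, and no barrier argument will produce a uniform positive lower bound; moreover the preliminary $C^0$--$C^1$ control you invoke is not available before the curvature estimate (in the paper it is derived \emph{from} it, in Theorem 2). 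The idea you are missing, which makes all of this machinery unnecessary, is that for an \emph{isometric} embedding the quantity $\tfrac12\Delta R=A^{ij}\Delta A_{ij}-H\Delta H+|\nabla A|^2-|\nabla H|^2$ is intrinsic, hence controlled by $g$ alone. One then works at an interior maximum of $H$ itself: there $F^{ij}\nabla^2_{ij}H\le 0$ with $F^{ij}=Hg^{ij}-A^{ij}\ge 0$ by admissibility, so
\begin{equation*}
\tfrac12\Delta R\;\ge\;A^{ij}\bigl(\Delta A_{ij}-\nabla_i\nabla_j H\bigr)\;=\;|A|^4-H\tr A^3+\rho^{-2}\bigl(n|A|^2-H^2\bigr),
\end{equation*}
and after substituting $|A|^2=H^2-\psi_\rho$ and using the Newton--Maclaurin inequalities on $H|A|^2-\tr A^3\ge \tfrac1n H\psi_\rho$ one gets
\begin{equation*}
\tfrac12\Delta R\;\ge\;(n-1)\bigl(\rho^{-2}-n^{-1}\psi_\rho\bigr)H^2+\psi_\rho^2-n\rho^{-2}\psi_\rho,
\end{equation*}
where the lower pinching is exactly what makes the coefficient of $H^2$ positive. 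This bounds $H$ at its maximum, hence everywhere, with no auxiliary test function, no eigenvalue perturbation, and no support-function positivity to establish. If you want to salvage your route, the tilt $\tau=\langle\nu,E_0\rangle$, which satisfies $\tau\ge 1$, is the natural positive auxiliary function in this geometry, but you would still owe the full Simons-identity bookkeeping that the intrinsic $\Delta R$ identity lets the paper avoid.
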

\noindent
Our second result is a compactness theorem for the graphical representation as in section 2.2, which implies closedness in the space of embeddings.
\begin{thm}
Let  $X:(S^n,g) \to (S^{n,1}_\rho,\bar{g})$ be as in Theorem 1. Then there exists an rotation $R$ of $S^{n,1}_\rho$ such that $RX$ is smooth graph over the round sphere $S^n_\rho \subset S^{n,1}_\rho $. Indeed for all $k\in \mathbb{N}$ there exist constants $C_{k}=C_{k}(n,\rho, g)$ such that the graph function $u$ of $RX$ satisfies 
\[\|u\|_{C^{k}}\leq C_{k}\ .\]
\end{thm}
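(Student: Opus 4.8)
\emph{Gauge and the isometric relation.} My plan is to work in the global (geodesic) slicing of de Sitter space, writing $S^{n,1}_\rho$ as the warped product $(\mathbb{R}\times S^n,\ -dt^2+\alpha(t)\,\sigma)$ with $\alpha(t)=\rho^2\cosh^2(t/\rho)$ and $\sigma$ the unit round metric, so that the waist $\{t=0\}$ is exactly $S^n_\rho$. As in \S2.2, a compact spacelike hypersurface is achronal and meets each complete timelike fibre $t\mapsto(t,\omega)$ once, hence $X(S^n)$ is a graph $t=u(\omega)$; graphicality itself therefore needs no rotation. In this gauge the isometric condition \eqref{eq:1} is the pointwise relation $g=\alpha(u)\,\sigma-du\otimes du$, from which I extract the two scalar identities $\operatorname{tr}_\sigma g=n\,\alpha(u)-|\nabla u|_\sigma^2$ and, by the matrix determinant lemma,
\[
(n-1)\,\alpha(u)+\frac{\det g}{\det\sigma}\,\alpha(u)^{-(n-1)}=\operatorname{tr}_\sigma g .
\]
The second identity controls $\alpha(u)$ — and hence $|u|$ — pointwise in terms of the coordinate components of the fixed metric $g$, while the first then returns $|\nabla u|_\sigma^2$ and the spacelike defect $\alpha(u)-|\nabla u|_\sigma^2>0$. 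Consequently a $C^1$ bound for $u$, together with uniform spacelikeness, would follow at once from a bound on the coordinate representation $g_{ij}(\omega)$ with $\det g$ bounded below.

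\emph{The rotation and the $C^0$/$C^1$ estimate (the crux).} This reduces the problem to the genuinely global step, which I expect to be the main obstacle: the isometry group of $S^{n,1}_\rho$ is the noncompact Lorentz group $O(n+1,1)$, and a boost preserves $g$ while sending the graph function to arbitrarily large norm and distorting the projection $\Sigma\to S^n$; a rotation $R$ must be used to remove this freedom before any uniform bound is possible. To carry this out I would exploit the ambient-linear functions $\phi_a=\langle X,a\rangle$ and $\psi_a=\langle\nu,a\rangle$ for a future unit timelike $a\in\mathbb{R}^{n+1,1}$, which on any spacelike hypersurface of de Sitter space satisfy the elliptic relation
\[
\operatorname{Hess}\phi_a=-\psi_a\,A-\rho^{-2}\phi_a\,g .
\]
Feeding the curvature estimate $|A|^2\le C$ of Theorem \ref{thm:CurvEsts} into this relation and into the companion equation for $\psi_a$, and choosing $R$ so that $a=e_0$ is the distinguished (centering) timelike direction, I would use the maximum principle together with barriers built from the round slices and the fixed intrinsic diameter of $(S^n,g)$ to bound the tilt $\psi_a$ and the height $\phi_a$. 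This yields the uniform spacelikeness and the bound on $g_{ij}(\omega)$, hence $\|u\|_{C^1}\le C_1(n,\rho,g)$.

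\emph{The $C^2$ estimate.} With $u$ controlled in $C^1$ and the surface uniformly spacelike, the second-order bound comes directly from Theorem \ref{thm:CurvEsts}: in the graph gauge each component of the second fundamental form is an explicit expression in $D^2u$, $\nabla u$, $u$ and the ambient Christoffel symbols, and uniform spacelikeness makes this relation invertible, so $|A|^2\le C$ bounds $D^2u$ and gives $\|u\|_{C^2}\le C_2(n,\rho,g)$.

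\emph{Bootstrapping to $C^k$.} Finally, via the Gauss equation the isometric condition is equivalent to a fully nonlinear second-order equation for $u$ of prescribed-scalar-curvature ($\sigma_2$) type; the pinching $\rho^{-2}n(n-2)<R<\rho^{-2}n(n-1)$ places the principal curvatures in the admissible cone where this operator is elliptic, and combined with the uniform spacelikeness and the $C^2$ bound it is uniformly elliptic with bounded coefficients. I would then invoke the Evans--Krylov theorem to pass from $C^{1,1}$ to $C^{2,\alpha}$, and bootstrap by differentiating the equation and applying Schauder estimates to obtain $\|u\|_{C^{k}}\le C_k(n,\rho,g)$ for every $k$. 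The only genuinely global and delicate ingredient is the normalisation and tilt/height estimate of the second paragraph; the remaining steps are algebraic consequences of \eqref{eq:1} or standard fully nonlinear elliptic regularity.
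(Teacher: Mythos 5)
You have the right skeleton (normalise by an isometry, bound tilt and height, read off $C^1$ and $C^2$ from spacelikeness and the curvature estimate, then Evans--Krylov and Schauder), and your third and fourth paragraphs coincide with the paper's argument. But the step you yourself flag as the crux --- the tilt/height bound after the normalising rotation --- is left as a declaration of intent (``maximum principle together with barriers built from the round slices''), and that is a genuine gap. A maximum-principle attack on $\tau=\psi_{E_0}$ and $\eta=\phi_{E_0}$ runs into circularity here: from \eqref{eq:etarho} and \eqref{eq:taurho}, at an interior maximum of $\eta$ one gets $H\tau\le n\rho^{-1}\eta$ and at an interior maximum of $\tau$ one gets $\tau|A|^2\le \rho^{-1}H\eta$ plus an uncontrolled $\n^k\eta\,\n_k H$ term, so each quantity is bounded only in terms of the other and nothing closes without further input. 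You would need to actually produce the barriers and show they work, which you have not done.

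The paper's mechanism is different and entirely first order: from \eqref{eq:ntau} one has $\n_i\tau=A_i^j\n_j\eta$, and \eqref{eq:neta2} gives $|\n\eta|\le\tau$, hence $|\n\log\tau|\le|A|\le C_H$ by Theorem \ref{thm:CurvEsts}. Choosing the rotation so that $\nu(p)=E_0$ at one point forces $\tau(p)=1$ (and $\eta(p)=0$ by \eqref{eq:neta2}), and integrating the gradient bound along intrinsic geodesics gives $\max\tau\le e^{C_H\operatorname{diam}(M,g)}$, where the diameter is determined by the prescribed metric $g$ alone; the height bound then follows again from \eqref{eq:neta2}. This is exactly where your ``fixed intrinsic diameter'' ingredient enters, but through an ODE-type integration of a pointwise gradient estimate rather than a maximum principle. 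Your determinant/trace identities for $\alpha(u)$ in the first paragraph are correct as algebra but do not help, since the components $g_{ij}(\omega)$ in graph coordinates depend on the unknown projection diffeomorphism and are precisely what the tilt bound is needed to control. To complete your proof you should replace the barrier sketch by the gradient-integration argument (or supply a working barrier construction, which is not evident).
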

\begin{rem}
We note that the scaling of de Sitter space is necessary. For example a short argument implies that there can be no isometric embedding of small spheres into de Sitter space of radius 1, see Lemma \ref{lem:Nonexist}.
\end{rem}
\begin{rem}
The isometric embeddings of Mu-Tau Wang and Shing-Tung Yau in \cite{Wang-Yau} are typically codimension 2 embeddings, utilising isometric embedding theorems into Euclidean spaces. The key difference in our approach is that we are considering a codimension 1 embedding into $S^{n,1}$. However, we also note that we could also consider this as a codimension 2 embedding into $\bb{R}^{n+1,1}$.
\end{rem}
\noindent

\section{Geometry of hypersurfaces in de Sitter space}
 
\noindent
We take $\bb{R}^{n+1,1}$ to be $(n+2)$-dimensional Minkowski space, where we choose the standard orthogonal basis $E_0, E_1, \ldots, E_{n+1}$ and $E_0$ has Minkowski norm \[\ip{E_0}{E_0}=:|E_0|^2=-1\ .\] 
We consider the $(n+1)$-dimensional de Sitter space $S^{n,1}_\rho\subset \bb{R}^{n+1, 1}$, which is the Lorentzian manifold consisting of the points $x = (x_0,\ldots,x_{n+1})\in \bb{R}^{n+2}_{1}$ with Minkowski length $\rho$ :
     \begin{equation}
\label{eq:2}
    (S^{n,1}_\rho, \overline g)   :=  \{x \in \bb{R}^{n+2}_{1} | \langle x, x \rangle = \rho^2\} ,
     \end{equation}
     where $\ov{g}$ is the induced metric. Note that here and throughout the paper the index indicates the signature of the metric, so $\bb{R}^{n+1,1}$ is $(n+2)$ dimensional.
     We will write geometric quantities on de Sitter space with a bar, namely $\ov{g}$, $\ov{\n}$, $\ov{A}$, etc. are the induced metric, covariant derivative and second fundamental form. We will write geometric quantities without a bar for quantities on  the embedded hypersurface $(S^n,g) \hookrightarrow (S^{n,1}_\rho, \overline g)$, so for example $g$ and $\n$ for the induced metric and covariant derivative on the isometric immersion $M$, while we will use $\nu$ for the downward pointing timelike unit normal. Finally, we use a double bar for quantities on $\bb{R}^{n+1,1}$, e.g. we will write $\ov{\ov{\n}}$ for the covariant derivative. When indices are repeated we will employ Einstein summation convention where we assume latin letters run in $1\leq i,j,k,\ldots\leq n$ and greek letters run in $0\leq \alpha, \beta, \gamma, \ldots \leq n+1$.
     
     \subsection{Curvature identities} Here we make explicit our chosen sign conventions and collect various standard geometric identities which will be useful later.
     
     For any connection $\n$, we pick the conventions
\[R(X,Y)Z = \n_Y \n_X Z-\n_X \n_Y Z+\n_{[Y,X]} Z, \qquad R(X,Y,Z,A)=\ip{R(X,Y)Z}{A}\ .\]
and for a nondegenerate hypersurface $\widehat{M}^n\subset \widetilde{M}^{n+1}$, we define the second fundamental form $A$ by
\begin{equation}\label{eq:def2ff}
    \widetilde{\n}_X Y = \widehat{\n}_X Y +\e A(X,Y) \nu
\end{equation}
where $\e = \ip{\nu}{\nu}$ and $\nu$ is a local unit normal to $\widehat{M}^n$. As a result, we have the identity
\[-\ip{\widetilde{\n}_Y \nu}{X} = \ip{\nu}{\widetilde{\n}_Y X}=A(X,Y)\ .\]
These conventions give rise to the Codazzi and Gauss equations given by
\[\ip{\widetilde{R}(X,Y)Z}{\nu} = \widehat{\n}_Y A(X,Z) - \widehat{\n}_X A(Y,Z)\]
and
\[\ip{\widetilde{R}(X,Y)Z}{V} = \ip{\widehat{R}(X,Y)Z}{V}+\e[A(Y,Z)A(X,V)-A(X,Z)A(V,Y)]\ .
\]

By differentiating the defining equation for $S^{n,1}_1(\rho)\subset \bb{R}^{n+1,1}$ at $p\in S^{n,1}_\rho$ for outward normal $\ov{\nu} = \rho^{-1} p$ we have
\[\ov{A}_{\alpha\beta} = -\rho^{-1}\ov{g}_{\alpha\beta}\]
and so the Gauss equation yields
\[\ov{R}_{\a\b\g\d} = \rho^{-2}\left(\ov{g}_{\a\g}\ov{g}_{\b\d}-\ov{g}_{\b\g}\ov{g}_{\a\d}\right)\ .\]
Applying \eqref{eq:def2ff} to $M\subset S^{n,1}_\rho\subset \bb{R}^{n+1,1}$ we have the Gauss formula
  \begin{equation}
  \label{eq:6}
\ov{\ov{\n}}_i X_j = \nabla_{i}X_{j} - A_{ij}\nu - \rho^{-1}g_{ij} \rho^{-1}X.
\end{equation}
the Codazzi equation
\begin{equation}
  \label{eq:Codazzi}
  \nabla_{k}A_{ij} = \nabla_{i}A_{kj}.
\end{equation}
and the Gauss equation
\begin{equation}
    \label{eq:Gauss_eq}
R_{ijkl} =A_{il}A_{jk}- A_{ik}A_{jl} +\rho^{-2}\left(g_{ik}g_{jl}-g_{il}g_{jk}\right)
\end{equation}
Note that the Ricci and the scalar curvature tensors in orthonormal frame are given respectively by
\begin{equation}
  \label{eq:ricciCT}
  R_{ik} = g^{jl}R_{ijkl} =  A_{jk}A^{j}_{i}- H A_{ik}  + \rho^{-2}(n -1)g_{ik},
\end{equation}
and
\begin{equation}
  \label{eq:ScalarCT}
  R = g^{ik}R_{ik} =  |A|^2 - H^2  + \rho^{-2}n(n-1).
\end{equation}
 
We also recall the well known formula for derivative interchange 
\begin{equation}
    \nabla_{k}\nabla_{l} A_{ij} - \nabla_{l}\nabla_{k} A_{ij} = R_{kljr}A_{i}^r + R_{klir}A_{j}^r. \label{eq:Interchange}
\end{equation}

\subsection{Graphical hypersurfaces in de Sitter space}
     We denote the standard Euclidean round unit sphere by  $(S^n,\sigma)$, which we will think of as being the standard embedding in $\on{span}\{E_1, \ldots, E_{n+1}\}\subset \bb{R}^{n+1,1}$.
     Given $r\in \bb{R}$ and $\xi = \xi(x^1,\ldots,x^n) \in S^n$,  we can re-write (\ref{eq:2})
     in polar coordinates and identify $S^{n,1}_\rho$ with the points 
     \begin{equation}
     Y(r, \xi) = \rho \sinh r E_0+\rho\cosh(r)\xi \in \bb{R}^{n+1,1}. 
     \end{equation}
Let $u:\mathbb{S}^n\to\mathbb{R}$ be a function and consider $M=\mbox{graph}(u)$, parametrised by
$X:\mathbb{S}^n\to S^{n,1}_\rho$ given by $X(\xi)=Y(\sinh(u(\xi)),\xi)$. We will use $D$ to denote the standard covariant derivative for the metric $\sigma$ on $\mathbb{S}^n$, and throughout we will require the embedding to be spacelike.

Note here that we are choosing a particular isometrically embedded $S^n$ as our ``equator'' given by the zero graph $u=0$ based on our choice of $E_0$. Due to the invariance of $S^{n,1}_\rho$ under isometries of $\bb{R}_{1}^{n+2}$ and the noncompactness of possible choices of $E_0$, the choice of graph direction will be important for us later.

\noindent
Writing the unit timelike vector $E_r := \rho^{-1}\frac{\partial Y}{\partial r}$ we have that
\begin{equation}
    g_{ij} = \rho^2(\cosh^2(u) \sigma_{ij} - u_iu_j)
\end{equation}
the normal vector $\nu$ can be written as
\begin{equation*}
  \nu =-\frac{1}{\sqrt{\cosh^2 u-|Du|^2}}(\cosh(u)E_r+Du).
\end{equation*}
where as usual $Du = u_i\sigma^{ij}\xi_i$. 

We define the tilt of $M$ as
\begin{equation}
\label{eq:tilt}
    \tau: =\langle\nu, E_0\rangle,
\end{equation}
and the height function by
\begin{equation}
\label{eq:height}
\eta := - \langle X , E_0 \rangle,  
\end{equation}
and so in terms of the graph function $u$ we have
\begin{equation}
    \eta = \rho \sinh u, \qquad \tau = \frac{\cosh^2 (u)}{\sqrt{\cosh^2(u) - |Du|^2}}\ .
\end{equation}
$M$ is spacelike at a point if $g_{ij}$ is positive definite, which is equivalent to $\tau$ being finite at that point. We note that with an upper bound on $\tau$ we have complete control of the eigenvalues of $g_{ij}$. A standard calculation yields that
 \begin{equation}
  \label{eq:Secondff}
  A_{ij} = \rho\cosh^{-1}(u)\tau
  \left(D^2_{ij}u - 2\tanh(u)D_{i}u D_{j}u + \sinh(u)\cosh(u)\sigma_{ij}\right).
\end{equation}

\subsection{The PDE} 
We consider the isometric embedding as a prescribed curvature problem. These will be written in terms of symmetric polynomials of principal curvatures. We define the k-symmetric polynomial
\[P_k(\l_1, \ldots, \l_n) = \sum_{1\leq i_1<i_2<\ldots<i_k\leq n} \l_{i_i}\l_{i_2}\ldots \l_{i_k}\ .\]
We will be particularly interested in $P_2(\vec{\lambda})$ where $\vec{\lambda} = (\lambda_1,...,\lambda_n)$ are the eigenvalues of $A$. This may also be written as $P_2 = \frac 1 2(H^2-|A|^2)$, where here we using the definition that $H=P_1(\vec{\l})$. We recall that curvature cone $\Gamma_2$ is the connected component of $\{x\in \bb{R}^n| P_2(x)>0\}$ which contains the positive cone. A hypersurface will be said to be \emph{admissible} if at every point its principal curvatures satisfy  $\vec{\lambda}\in \Gamma_2$.
We also define 
\[ P_{k,i}(\vec{\lambda}) := P_{k}(\lambda_1,\ldots,\lambda_{i-1},0,\lambda_{i+1},\ldots, \lambda_n) 
\]
\noindent
By \eqref{eq:ScalarCT} we see that we have the equation
\begin{equation}
\label{eq:prescription}
2F(\vec{\l}):=2P_2(\vec{\l}) = \rho^{-2}n(n-1) - R = : \psi_\rho,
\end{equation}
where we see that $\psi_\rho$ depends only on the prescribed metric and $\rho$. This is a prescribed symmetric curvature functional equation, and the condition for this to be admissible for convex surfaces is $\rho^{-2}n(n-1)-R>0$. This is an upper bound requirement on the total curvature - the opposite to Euclidean space. This comes from the sign on the normal in the Gauss equations. 

We note here that $f(\vec{\lambda}):=\sqrt{F(\vec{\lambda})}$ is a concave and we may calculate that
\[F^{ij} = Hg^{ij}-A^{ij}\ .\]

\section{Preliminary lemmata}
\begin{lem}
We have
\begin{flalign}
 \n_i \eta & = -\ip{E_0}{X_i}\\
\n_i \tau &= -A_i^j\ip{X_j}{E_0} = A_i^j\n_j\eta \label{eq:ntau} \\
& | \nabla \eta|^2 +\rho^{-2}\eta^2+1= \tau^2 \label{eq:neta2}
\end{flalign}
\end{lem}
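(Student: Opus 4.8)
The plan is to establish the three identities in order, since the first feeds into the second and both feed into the third. Throughout I treat $X$, the coordinate tangent vectors $X_i = \n_i X$, and $\nu$ as $\bb{R}^{n+1,1}$-valued maps along $M$ and differentiate them with the ambient (Minkowski) connection $\ov{\ov{\n}}$, using that $E_0$ is a fixed vector so $\ov{\ov{\n}}_i E_0 = 0$ and that $\ov{\ov{\n}}_i X = X_i$.

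For the first identity I would simply differentiate the definition $\eta = -\ip{X}{E_0}$ along $M$. Since $E_0$ is constant, $\n_i\eta = -\ip{\ov{\ov{\n}}_i X}{E_0} = -\ip{X_i}{E_0}$, and there is nothing further to do. For the second identity the key is a Weingarten-type formula $\ov{\ov{\n}}_i\nu = -A_i^j X_j$. To obtain it I would first check that $\ov{\ov{\n}}_i\nu$ is tangent to $M$: differentiating $\ip{\nu}{\nu}=-1$ shows it is orthogonal to $\nu$, while differentiating $\ip{\nu}{X}=0$, together with $\ov{\ov{\n}}_i X = X_i$ and $\ip{\nu}{X_i}=0$, shows it is orthogonal to $X$. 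Its tangential components then come from differentiating $\ip{\nu}{X_k}=0$ and substituting the Gauss formula \eqref{eq:6}, using $\ip{\nu}{\nu}=-1$ and $\ip{\nu}{X}=0$; this gives $\ip{\ov{\ov{\n}}_i\nu}{X_k} = -A_{ik}$, hence $\ov{\ov{\n}}_i\nu = -A_i^j X_j$. Differentiating $\tau = \ip{\nu}{E_0}$ and inserting this yields $\n_i\tau = -A_i^j\ip{X_j}{E_0}$, and the final equality $= A_i^j\n_j\eta$ is then immediate from the first identity.

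The most substantive identity is the third, and for it I would decompose the fixed vector $E_0$ in the frame $\{X_1,\dots,X_n,\nu,\rho^{-1}X\}$, which is an orthogonal basis of $\bb{R}^{n+1,1}$ at each point of $M$: the $X_i$ span $T_pM$, $\nu$ is the unit timelike normal to $M$ inside de Sitter space, and $\rho^{-1}X$ is the unit spacelike normal to de Sitter space in Minkowski space. Writing $E_0 = a^i X_i + b\nu + c\rho^{-1}X$ and pairing with each frame vector, the inner products give $a^i = -\n^i\eta$ (from the first identity), $b = -\tau$ (from $\ip{\nu}{\nu}=-1$), and $c = -\rho^{-1}\eta$ (from $\ip{X}{X}=\rho^2$ and $\eta = -\ip{X}{E_0}$). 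Evaluating $\ip{E_0}{E_0}=-1$ in this orthogonal frame, so that all cross terms drop out, produces $-1 = |\n\eta|^2 - \tau^2 + \rho^{-2}\eta^2$, which rearranges to the claimed $|\n\eta|^2 + \rho^{-2}\eta^2 + 1 = \tau^2$.

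No single step is genuinely difficult; the only real pitfall is sign bookkeeping arising from the timelike normal ($\e = \ip{\nu}{\nu} = -1$) and from the mixed signature, so I would track these carefully both in the Weingarten computation and in evaluating $\ip{E_0}{E_0}=-1$. The one non-mechanical idea is the orthogonal-frame decomposition of $E_0$ used for the third identity; everything else reduces to direct differentiation and substitution of \eqref{eq:6}.
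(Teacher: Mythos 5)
Your proof is correct and follows essentially the same route as the paper: the first two identities by direct differentiation (with the Weingarten formula, which the paper treats as immediate), and the third by decomposing $E_0$ in the orthogonal frame $\{X_i,\nu,\rho^{-1}X\}$ and evaluating $\ip{E_0}{E_0}=-1$. Your tangential coefficient $-\n\eta$ is actually the one consistent with the stated identity $\n_i\eta=-\ip{E_0}{X_i}$ (the paper's display \eqref{eq:e1} has the opposite sign there), but this does not affect the squared-norm conclusion.
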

\begin{proof}
The calculations of first two derivatives is immediate from the definitions and our above sign conventions.

For \eqref{eq:neta2}, the ambient vector $E_0$ may be decomposed in the following coordinates 
\begin{equation}
\label{eq:e1}
  \begin{split}
    E_{0} &= \nabla\eta - \tau \nu - \rho^{-2}\eta X.
    \end{split}
  \end{equation}
and so we have
\begin{equation}
 -1= | \nabla \eta|^2- \tau^2+\rho^{-2}\eta^2\ .
  \end{equation}
 
\end{proof}
\begin{lem}
\label{lem:ddeta_tau_A}
We have the following second order relations.
\begin{flalign}
\label{eq:etarho}\n^2_{ij} \eta &=A_{ij} \tau - \rho^{-1}g_{ij}\eta\\
\label{eq:taurho}\n^2_{ij} \tau &=\n^k \eta\n_{k}A_{ij} + \tau A^2_{ij} - \rho^{-1}A_{ij} \eta\\
\label{Simonsrho}\Delta A_{ij} - \n^2_{ij} H&= |A|^2A_{ij} - HA^2_{ij} +\rho^{-2}[nA_{ij} - Hg_{ij}]
\end{flalign}
\end{lem}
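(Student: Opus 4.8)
The three relations increase in difficulty: the first two are obtained by differentiating the first-order identities of the previous lemma once more, while the third is a Simons-type identity in which the ambient curvature of $S^{n,1}_\rho$ enters.

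For \eqref{eq:etarho} the plan is to start from the relation $\n_i \eta = -\ip{E_0}{X_i}$ of the previous lemma and take one further covariant derivative. Since $E_0$ is a fixed vector of $\bb{R}^{n+1,1}$, its ambient derivative vanishes, so once the Christoffel corrections needed to pass to the intrinsic Hessian absorb the tangential contribution one is left with $\n^2_{ij}\eta = -\ip{E_0}{\ov{\ov\n}_i X_j}$. Substituting the Gauss formula \eqref{eq:6} and pairing against $E_0$, the tangential term $\n_i X_j$ contributes only Christoffel symbols which cancel, the normal term yields $A_{ij}\ip{E_0}{\nu} = A_{ij}\tau$, and the position term yields a multiple of $\ip{E_0}{X} = -\eta$; collecting these gives \eqref{eq:etarho}.

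For \eqref{eq:taurho} I would differentiate $\n_i \tau = A_i^j \n_j\eta$ from \eqref{eq:ntau} by the product rule, producing the two terms $(\n_j A_i^k)\n_k\eta$ and $A_i^k \n^2_{jk}\eta$. The Codazzi equation \eqref{eq:Codazzi} rewrites the first as $\n^k\eta\,\n_k A_{ij}$, and into the second I substitute \eqref{eq:etarho} just proved, which produces $\tau A^2_{ij}$ together with the claimed multiple of $A_{ij}\eta$. Adding the two terms gives \eqref{eq:taurho}.

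The substance of the lemma is the Simons-type identity \eqref{Simonsrho}. Here I would expand $\Delta A_{ij} = g^{kl}\n_k\n_l A_{ij}$, use Codazzi \eqref{eq:Codazzi} to replace $\n_l A_{ij}$ by $\n_i A_{lj}$, and then commute $\n_k$ and $\n_i$ via the interchange formula \eqref{eq:Interchange}. After a second application of Codazzi and tracing with $g^{kl}$, the commuted derivative term becomes exactly $\n^2_{ij}H$, while the remainder is a pair of contractions of the Riemann tensor against $A$. Substituting the Gauss equation \eqref{eq:Gauss_eq}, one contraction reproduces the Ricci tensor \eqref{eq:ricciCT} and the other a further curvature contraction. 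The point demanding the most care, and the main obstacle, is that each of these contractions individually generates a cubic term $A_i^k A_k^l A_{lj}$; these occur with opposite signs and must cancel precisely, leaving $|A|^2 A_{ij} - H A^2_{ij}$, while the constant-curvature part $\rho^{-2}(g_{ik}g_{jl}-g_{il}g_{jk})$ of \eqref{eq:Gauss_eq} assembles into the term $\rho^{-2}(nA_{ij}-Hg_{ij})$.
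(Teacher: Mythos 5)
Your proposal is correct and follows essentially the same route as the paper: \eqref{eq:etarho} by differentiating $\n_i\eta=-\ip{E_0}{X_i}$ and inserting the Gauss formula \eqref{eq:6}, \eqref{eq:taurho} by the product rule together with Codazzi and the just-proved \eqref{eq:etarho}, and \eqref{Simonsrho} by Codazzi, the commutation formula \eqref{eq:Interchange}, and the Gauss equation \eqref{eq:Gauss_eq}. You also correctly single out the cancellation of the two cubic terms $A_i^kA_k^sA_{sj}$, which is exactly what happens in the paper's computation.
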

\begin{proof}

We calculate
\begin{flalign*}
\n^{2}_{ij} \eta & =  - \ip{E_0}{D^2_{ij} X - \n_{X_i} X_j}\\
 & =  - \ip{E_0}{\ov{A}_{ij}X-A_{ij} \nu}\\
 & =\ov{A}_{ij} \eta +A_{ij} \tau\\
 &=A_{ij} \tau - \rho^{-1}g_{ij}\eta
\end{flalign*}
Next
\begin{flalign*}
\n^2_{ij} \tau & = \n_i A_j^k \n_k \eta + A_j^k \n_{ik}\eta \\
&=\n^k \eta\n_{k}A_{ij} + \tau A^2_{ij} - \rho^{-1}A_{ij} \eta
\end{flalign*}
\noindent
Finally we demonstrate the Simons-like identity. Using the Codazzi\eqref{eq:Codazzi}, derivative interchange \eqref{eq:Interchange} and Gauss equations \eqref{eq:Gauss_eq} we have
\begin{flalign*}
\n^{\ k}\n_k A_{ij} - \n_i\n_j A_k^k&=\n_k\n_i A_{j}^k - \n_i\n_k A_j^k\\
&=R_{kils}g^{kl}A^s_j+R_{kijs}A^{ks}\\
&=A_i^kA_{ks}A^s_j - A_k^kA_{is}A^s_j + A_{ij}A_{ks}A^{ks} - A_{kj}A^{ks}A_{is}\\
&\qquad+\rho^{-2}(n-1)A_{ij}+\rho^{-2}A_{ij} - \rho^{-2}Hg_{ij}\\
&= |A|^2A_{ij} - HA^2_{ij} +\rho^{-2}nA_{ij} - \rho^{-2}Hg_{ij}\ .
\end{flalign*}
\end{proof}
We now briefly justify that for general isometric embedding problems, we need to scale de Sitter space to get reasonable estimates. We briefly demonstrate that we may not embed an arbitrarily small sphere into $S^{n,1}_1$.
\begin{lem}\label{lem:Nonexist}
Let $S^n_r$ be the standard round Euclidean sphere of radius $r\in (0,1)$. We have that
\begin{enumerate}[a)]
    \item For $n\geq 3$ there does not exist an isometric embedding $X:S_r^n \to S^{n,1}_1$.
    \item For $n=2$ there does not exist an admissible isometric embedding $X:S_r^2 \to S^{2,1}_1$.
\end{enumerate}
\end{lem}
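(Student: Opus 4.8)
My plan is to convert the Gauss equation \eqref{eq:Gauss_eq} into a pointwise algebraic constraint on the principal curvatures and then argue purely by signs. Since $S^n_r$ is the round sphere of radius $r$, it has constant sectional curvature $r^{-2}$, so in the sign conventions of this paper its curvature tensor is $R_{ijkl} = r^{-2}(g_{ik}g_{jl} - g_{il}g_{jk})$ (which matches the de Sitter computation $\ov R_{\a\b\g\d} = \rho^{-2}(\ov g_{\a\g}\ov g_{\b\d}-\ov g_{\b\g}\ov g_{\a\d})$ recorded earlier). Setting $\rho = 1$ in \eqref{eq:Gauss_eq} and subtracting, any isometric embedding must satisfy, at every point,
\[
A_{il}A_{jk} - A_{ik}A_{jl} = c_0\,(g_{ik}g_{jl} - g_{il}g_{jk}), \qquad c_0 := r^{-2} - 1 > 0,
\]
where positivity of $c_0$ uses precisely that $r \in (0,1)$.

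Next I would diagonalise $A$ in a $g$-orthonormal frame at an arbitrary point, writing $A_{ij} = \lambda_i\delta_{ij}$ and $g_{ij} = \delta_{ij}$. Evaluating the displayed identity on the index pattern $(i,j,i,j)$ for $i \neq j$ collapses the left-hand side to $-\lambda_i\lambda_j$ and the right-hand side to $c_0$, yielding
\[
\lambda_i\lambda_j = -c_0 < 0 \quad\text{for all } i \neq j .
\]
This single relation drives both cases.

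For part (a), with $n \ge 3$, I would pick three distinct indices $1,2,3$. From $\lambda_1\lambda_2 < 0$ and $\lambda_1\lambda_3 < 0$ the numbers $\lambda_2,\lambda_3$ must share the sign opposite to $\lambda_1$ (none can vanish, since the products are nonzero), forcing $\lambda_2\lambda_3 > 0$ and contradicting $\lambda_2\lambda_3 = -c_0 < 0$. Hence no embedding, admissible or not, can exist. For part (b), with $n = 2$, the relation reads $P_2(\vec{\lambda}) = \lambda_1\lambda_2 = -c_0 < 0$, whereas admissibility means $\vec{\lambda} \in \Gamma_2$, i.e.\ $P_2 > 0$; this contradiction rules out admissible embeddings.

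I expect the only real content to lie in the first step: recognising that one must use the full tensorial Gauss equation rather than its scalar trace. Indeed, \eqref{eq:ScalarCT} alone gives only $|A|^2 - H^2 = n(n-1)c_0 > 0$, i.e.\ $P_2 < 0$, which defeats admissibility in (b) but is entirely consistent with a saddle configuration of eigenvalues when $n \ge 3$. The sharper pairwise information $\lambda_i\lambda_j = -c_0$ extracted from the $(i,j,i,j)$ components is exactly what makes (a) work with no admissibility hypothesis, and it also explains structurally why admissibility is genuinely needed in (b). The remaining steps are elementary sign bookkeeping.
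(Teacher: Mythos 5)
Your argument is correct and is essentially identical to the paper's proof: both evaluate the Gauss equation \eqref{eq:Gauss_eq} on the $(i,j,i,j)$ components in a principal frame to get $\lambda_i\lambda_j = -(r^{-2}-1) < 0$ for all $i \neq j$, then conclude by a sign contradiction for $n \ge 3$ and by $P_2 < 0$ violating admissibility for $n = 2$. Your write-up just spells out the three-index sign bookkeeping slightly more explicitly than the paper does.
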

\begin{proof}
This follows from the sectional curvature of the sphere and the Gauss equation. Suppose that such an embedding exists and take coordinates in the principal directions of the submanifold. As the embedding is isometric, \eqref{eq:Gauss_eq} implies for any $i\neq j$
\[\frac{1}{r^2} = R_{ijij} =- \l_i\l_j+1\]
so
\[0<\frac 1 {r^2} -1 =- \l_i\l_j\ ,\]
and so for any pair $i\neq j$, $\l_i$ and $\l_j$ must have opposite signs. In dimension 3 this is not possible. In dimension 2 this means that $P_2(\vec{\l})<0$ and so the embedding is nowhere admissible. 
\end{proof}
\section{Curvature in de Sitter space}
\noindent
We now prove an initial estimate for the mean curvature.
\begin{lem}
\label{lm1}
Suppose that we have a convex isometric immersion into de Sitter space with metric satisfying $\rho^{-2}-n^{-1}\psi_{\rho} > 0$ and $\psi_\rho >0$. Then

\begin{equation}
H^2 \leq \frac{(n-1)^{-1}}{\rho^{-2} - n^{-1}\psi_\rho}\left[\frac 1 2\Delta R-\psi_{\rho}^2 + n\rho^{-2}\psi_\rho\right].
\end{equation}

\end{lem}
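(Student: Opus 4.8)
The plan is to convert the Simons-type identity \eqref{Simonsrho} into a scalar equation for $\tfrac12\Delta R$ and then evaluate it at a point where $H$ attains its maximum on the closed surface, where convexity forces the gradient and Hessian terms to carry a favourable sign. Since by \eqref{eq:ScalarCT} we have $R=|A|^2-H^2+\rho^{-2}n(n-1)$, the constant is killed by $\Delta$ and
\[\tfrac12\Delta R=\tfrac12\Delta|A|^2-\tfrac12\Delta H^2 .\]
I would expand each piece by the product rule, $\tfrac12\Delta|A|^2=|\n A|^2+A^{ij}\Delta A_{ij}$ and $\tfrac12\Delta H^2=|\n H|^2+H\Delta H$, and then eliminate $A^{ij}\Delta A_{ij}$ using \eqref{Simonsrho}: contracting that identity with $A^{ij}$ and using $A^{ij}A^2_{ij}=\tr(A^3)$, $A^{ij}A_{ij}=|A|^2$, $A^{ij}g_{ij}=H$ gives
\[A^{ij}\Delta A_{ij}=A^{ij}\n^2_{ij}H+|A|^4-H\tr(A^3)+\rho^{-2}n|A|^2-\rho^{-2}H^2 .\]

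Combining these, and using $A^{ij}\n^2_{ij}H-H\Delta H=(A^{ij}-Hg^{ij})\n^2_{ij}H=-F^{ij}\n^2_{ij}H$ since $F^{ij}=Hg^{ij}-A^{ij}$, yields the pointwise identity
\[\tfrac12\Delta R=|\n A|^2-|\n H|^2-F^{ij}\n^2_{ij}H+Q,\qquad Q:=|A|^4-H\tr(A^3)+\rho^{-2}n|A|^2-\rho^{-2}H^2 .\]
Now I would evaluate at a maximum point of $H$. There $\n H=0$, so $|\n H|^2=0$, and the Hessian $\n^2_{ij}H$ is negative semidefinite; convexity gives $\lambda_i\ge0$, so $F^{ij}=Hg^{ij}-A^{ij}$ has eigenvalues $H-\lambda_i=\sum_{k\ne i}\lambda_k\ge0$ and is positive semidefinite, whence $-F^{ij}\n^2_{ij}H\ge0$. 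Discarding also $|\n A|^2\ge0$ gives $\tfrac12\Delta R\ge Q$ at that point.

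It then remains to establish the purely algebraic inequality $Q\ge(n-1)(\rho^{-2}-n^{-1}\psi_\rho)H^2+\psi_\rho^2-n\rho^{-2}\psi_\rho$, after which dividing by the positive factor $(n-1)(\rho^{-2}-n^{-1}\psi_\rho)$ produces exactly the stated bound. Substituting $|A|^2=H^2-\psi_\rho$ from \eqref{eq:prescription} into $Q$, the $\rho^{-2}(n-1)H^2$ and $\rho^{-2}n\psi_\rho$ terms cancel against their counterparts, and the required inequality reduces to $\tfrac{n-2}{n}P_1^2P_2-3P_1P_3\ge0$. Since $P_1=H\ge0$, this is in turn $\tfrac{n-2}{n}P_1P_2\ge3P_3$, which, after normalising $E_k:=P_k/\binom{n}{k}$, is precisely the Newton--Maclaurin inequality $E_1E_2\ge E_3$ for $\lambda_i\ge0$; for $n=2$ it degenerates to an equality.

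The main obstacle is controlling the cubic term $H\tr(A^3)$ produced by contracting \eqref{Simonsrho} with $A^{ij}$. Writing $p_k=\sum_i\lambda_i^k$, one has $|A|^4-H\tr(A^3)=p_2^2-p_1p_3=-\sum_{i<j}\lambda_i\lambda_j(\lambda_i-\lambda_j)^2$, which is $\le0$ for convex surfaces; this defect must be absorbed by the $\rho^{-2}$-terms in $Q$ together with the $\tfrac{n-1}{n}\psi_\rho H^2$ contribution, and it is exactly here that the convexity hypothesis, through Maclaurin's inequality, becomes indispensable. A secondary subtlety is that the estimate is genuinely extracted at the maximum of $H$: the discarded gradient and Hessian terms have no pointwise sign, so the inequality is to be read at that maximising point.
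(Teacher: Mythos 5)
Your proposal is correct and follows essentially the same route as the paper: evaluate at the maximum of $H$, use $-F^{ij}\n^2_{ij}H\geq 0$ with $F^{ij}=Hg^{ij}-A^{ij}\geq 0$, contract the Simons identity \eqref{Simonsrho} with $A^{ij}$, substitute $|A|^2=H^2-\psi_\rho$, and control the cubic term via symmetric-function inequalities. The only cosmetic difference is that you package the last step as the single combined inequality $E_1E_2\geq E_3$ (equivalently $\tfrac{n-2}{n}P_1P_2\geq 3P_3$), whereas the paper reaches the same bound by applying Newton's inequality followed by Maclaurin's.
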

\begin{proof}
At a maximum of $H$, we have $\n H=0$, $\n^2 H\leq 0$. As a result, since $Hg^{ij} - A^{ij}\geq 0$, since $\psi_\rho >0$ for small $\rho$ and $\partial P_{2}/\partial \lambda_j \geq 0$, (see \cite{CNS3}), 
\[H\Delta H - A^{ij} \n_{i}\n{j}H \leq 0\ .\] 
Using the identity \eqref{Simonsrho} from Lemma \ref{lem:ddeta_tau_A} , we compute as follows
\begin{flalign*}
\frac 1 2\Delta R &= A^{ij} \Delta A_{ij} - H\Delta H+|\n A|^2 - |\n H|^2\\
&\geq  A^{ij} (\Delta A_{ij} - \n_{i}\n_{j} H)\\
&= |A|^4-H\tr A^3+\rho^{-2}\left(n|A|^2  - H^2\right) 
\end{flalign*}
From equations \eqref{eq:prescription} we have that $|A|^2 =  H^2 - \psi_\rho$ and so applying this twice we get
\begin{flalign*}
\frac 1 2\Delta R 
&\geq H(H|A|^2-\tr A^3)-\psi_\rho H^2+\rho^{-2}(n-1)H^2 +\psi_\rho^2-n\rho^{-2}\psi_\rho \ .
\end{flalign*}
Following Urbas \cite[equation 3.8]{Urbas:cespacelike}, we note that using standard relations on symmetric polynomials
\[
H|A|^2 - \tr A^3 = \sum_{i=1}^n P_{1,i}(\vec{\l})\l_i^2= P_1(\vec{\l})P_2(\vec{\l})-3P_3(\vec{\l})\]
and so
\[H|A|^2 - \tr A^3 \geq P_1(\vec{\l})P_2(\vec{\l})-\frac{2(n-2)P_2^2(\vec{\l})}{(n-1)P_1(\vec{\l})}
\geq \frac{2}{n}P_1(\vec{\l})P_2(\vec{\l}) = \frac{1}{n}H\psi_\rho
\]
where the first inequality uses the Newton inequalities (which apply regardless of curvature cone) and the second inequality follows from the Maclaurin inequality in $\Gamma_2$.

Therefore we obtain that 
\begin{equation}
\begin{split}
    \frac{1}{2}\Delta R & \geq (n^{-1} - 1) \psi_{\rho} H^2 + \psi_{\rho}^2 + \rho^{-2}(n-1) H^2+\psi_\rho^2-n\rho^{-2}\psi_\rho\\
    & = (n-1) (\rho^{-2} - n^{-1} \psi_\rho) H^2 + \psi_\rho^2 - n\rho^{-2}\psi_\rho,
    \end{split}
\end{equation}    
from which the claimed inequality follows.
\end{proof}

\section{Proof of Theorem 1}
Recall that we have 
\[ \psi_\rho = \rho^{-2}n(n-1) - R. \]
In order to apply Lemma~\ref{lm1}, we require 
$ 0 < \psi_\rho < n\rho^{-2}$, equivalently
\begin{equation}
    \rho^{-2}n(n-2) < R < \rho^{-2}n(n-1).
\end{equation}
Note that we can write $H^2 = |A|^2 + 2 P_2$, and since we are assuming 2-convexity, the estimate for $|A|^2$ will follow from the estimate of $H^2$.

If $n = 2$ then the left hand side becomes zero and, as $R$ has a finite maximum. We may
see that if $\rho$ is small enough we may obtain a curvature estimate depending only on $R$
and it’s derivative.

\section{Proof of Theorem 2}
\noindent
Our first observation is that we need to choose a coordinate system that our estimates make reference to. This is because the Lorentz group $SO(1,n+1)$ of isometries fixing the origin of $\bb{R}^{n+1,1}$ is 
noncompact, and given any spacelike surface, and a given point on that surface, a rotated surface will have an arbitrarily large tilt at the corresponding point. As a result, it is clear that we must always choose a sensible graph direction or equivalently, sensible choices of base $E_0, \ldots, E_{n+1}$. However, the curvature estimates from Theorem \ref{thm:CurvEsts} hold regardless of graphical representation.
\noindent
Note from \eqref{eq:e1} that the tangential projection to the surface of the vector $E_0$, which we denote by $E^\top_0$, coincides with $\n \eta$, hence we have by equation \eqref{eq:neta2} we know
\begin{equation}
     \tau^2 =1 + \rho^{-2}\eta^2+|\n \eta|^2. \label{eq:tauidentity}
\end{equation}

Also from equation \eqref{eq:ntau}
we have that
\[|\n \tau|^2 \leq |A|^2|E_0^\top|^2\leq \tau^2 |A|^2\]
or, using Theorem \ref{thm:CurvEsts},
\[|\n \log \tau| \leq |A|\leq C_H\ .\]
We now pick our coordinate system: By choosing any point $p\in S^n$, we may choose a basis such that $\nu(p)$ is in direction $E_0$ which implies that $\tau(p)=1$. In particular, due to \eqref{eq:neta2} we also now know that $\eta(p)=0$. Therefore integrating from $p$ we have that 
\[\max\tau=\frac{\max\tau}{\tau(p)} \leq e^{C_H\operatorname{diam}(M)}=:C_\tau\ . \]
As $\operatorname{diam}(M)$ is intrinsic and $C$ depends only on $g$ and $\tau$ is bounded. By equation \eqref{eq:neta2} we also have that
\[|\eta|\leq C_\tau\ .\]
These estimates together supply the required $C^1$ bound as
\[\tau = \frac{\sqrt{1+\eta^2}}{\sqrt{1-\frac{|\tilde{\n} u|^2}{\eta^2+1}}}\ .\]
Using the graphical form of $A_{ij}$ we therefore know that $\eta$, and therefore $u$ are bounded in $C^2$.
\noindent
Our estimates now follow the familiar path from PDE theory. We note that, by setting $f=\sqrt{F}=\sqrt{P_2}$ we have that 
\[f(\lambda) = \sqrt{\frac{1}{2}(\rho^{-2}n(n-1)-R)}\]
which (by admissibility of the solution and the well-known convexity of $f$) is an elliptic PDE in $u$. Furthermore, due to our $C^2$ estimates the second fundamental form stays in a compact subset of the $\Gamma_2$ cone, and so this equation is uniformly elliptic. We may therefore apply Evans \cite{Evans} and Krylov-Safonov  \cite{KrylovSofonov} to see that $|u|_{C^{2+\alpha}}<C$ for some $\alpha\in(0,1)$. As our graph is now bounded in $C^{2+\alpha}$, $R$ is bounded in $C^\alpha$, and we may repeatedly apply elliptic H\"older estimates to get that $|u|_{C^{k+\alpha}}\leq C_{k+\alpha}$
for all $k\in \mathbb{N}$.

\end{document}